\newtheorem{thm}{Theorem}
\newtheorem{defn}{Definition}
\newtheorem{lem}{Lemma}
\newtheorem{prop}{Proposition}
\newcommand{\R}{{\mathbb R}}
\newcommand{\C}{{\mathbb C}}
\newcommand{\cB}{{\mathcal B}}
\newcommand{\cD}{{\mathcal D}}
\newcommand{\cDDp}{{\mathcal{DD}_+}}
\newcommand{\Hinf}{{\mathbb{H}_\infty}}
\newcommand{\cH}{{\mathcal H}}
\newcommand{\cK}{{\mathcal K}}
\newcommand{\cM}{{\mathcal M}}
\newcommand{\cS}{{\mathcal S}}
\newcommand{\cT}{{\mathcal T}}
\newcommand{\mH}{\mathcal H}
\newcommand{\mHp}{\mathcal H_{+}}
\newcommand{\tr}{\textrm{trace}}
\newcommand{\diag}[1]{\textrm{diag}\{#1\}}
\renewcommand{\Re}{\mathrm{Re}}
\newcommand{\ja}[1]{{\color{red}#1}}
\begin{document}
\title{On Existence of Solutions to Structured Lyapunov Inequalities}
\author{Aivar Sootla and James Anderson \\\\  \ja{A condensed version of this paper will appear} \\ \ja{ in the Proceedings of the 2016 American Control Conference }
\thanks{J. Anderson is with St John's College, Oxford and the Department of Engineering Science, University of Oxford, Parks Road, Oxford, OX1 3PJ, U.K. e-mail: james.anderson@eng.ox.acuk }
\thanks{A. Sootla is with the Montefiore Institute, University of Li\`ege, B28, Li\`ege Belgium, B4000 e-mail: asootla@ulg.ac.be. A. Sootla holds an F.R.S--FNRS fellowship. This paper is partially funded through the Belgian Network DYSCO, and the Interuniversity Attraction Poles Programme initiated by the Belgian Science Policy Office.}
\thanks{The authors would like to thank Prof. Amir Ali Ahmadi for valuable discussions, and specifically for pointing out the reference~\cite{basis_pursuit}.}}

\maketitle
\begin{abstract} 
In this paper, we derive sufficient conditions on drift matrices under which block-diagonal solutions to Lyapunov inequalities exist. The motivation for the problem comes from a recently proposed basis pursuit algorithm. In particular, this algorithm can provide approximate solutions to optimisation programmes with constraints involving Lyapunov inequalities  using linear or second order cone programming. This algorithm requires an initial feasible point, which we aim to provide in this paper. Our existence conditions are based on the so-called $\mH$ matrices. We also establish a link between $\mH$ matrices and an application of a small gain theorem to the drift matrix. We finally show how to construct these solutions in some cases without solving the full Lyapunov inequality. 
\end{abstract}

\section{Introduction}
Lyapunov equations and matrix inequalities play a central role in control theory, since they are used for, e.g., verifying stability of a dynamical systems, optimal control, and model order reduction (cf.~\cite{ZDG}). Lyapunov matrix inequalities with sparsity constraints on the decision variables are used in in the context of distributed control~\cite{linfarjovTAC13admm}, structured model reduction~\cite{Sandberg09} etc. In such applications, a typical constraint on the decision variables is block-diagonality of a matrix. The major bottleneck in solving optimisation programmes with a Lyapunov inequality constraint is scalability, since it is a semidefinite programme (SDP). 
There exist a number of methods addressing scalability of SDPs (cf.~\cite{MasP14,KimKMY11, SDPARA}), and in one of them, it was proposed to replace the constraints in the cone of positive semidefinite matrices with  conic inner-approximations~\cite{dsos_ciss14,MajAT14}. There are two main conic approximations: one which results in a linear programme (LP), and another which results in a second order cone programme (SOCP).  Since we are dealing with inner approximations of the cone of positive semidefinite matrices, even if the  LP or SOCP solution can be computed, this solution is usually conservative with respect to the optimal SDP solution. This limitation was partially addressed using the basis pursuit algorithm~\cite{basis_pursuit}, which is iterates over LPs or SOCPs and provides a guarantee of improvement with each iteration. This algorithm requires an initial feasible point in order to start the iterations. Hence, major questions still remain concerning existence theorems and scalable computation of block-diagonal solutions to Lyapunov inequalities.

Necessary and sufficient conditions for block-diagonal stability were described more than 20 years ago in~\cite{carlson1992block}. However, these results do not provide a constructive way to build block-diagonal Lyapunov functions. This perhaps explains why these results are relatively unused 
in the control theory literature. Besides some simple cases, such as, the drift matrix being block-triangular matrix (cf.~\cite{andersondecentralised}), it is known that the closed loop interconnection of strictly passive systems has a drift matrix which admits a block-diagonal solution to Lyapunov inequalities~\cite{trnka2013structured}. It is also well-known that stable \emph{Metzler} matrices admit diagonal solutions to the Lyapunov inequality~\cite{berman1994nonnegative}. Additional special cases are covered in~\cite{arcak2006diagonal}, \cite{arcak2011diagonal} and revisited in what follows.

In this paper, we aim at identifying additional cases, when a block-diagonal solution to a Lyapunov inequality can be found using algebraic methods or LPs. We start by studying a generalisation of Metzler matrices known as \emph{$\mH$ matrices}. Stable $\mH$ matrices possess many properties of stable Metzler matrices, for example they also admit diagonal solutions to Lyapunov inequalities~\cite{hershkowitz1985lyapunov}. We provide another such property, namely we show that for $\mH$ matrices, diagonal solutions to Lyapunov inequalities can be computed using algebraic methods and/or LPs. We then investigate conditions on specific blocks in block-partitioned matrices. We establish a link between the $\mH$ matrix conditions and a version of the small gain theorem before extending this intuition to block-partitioned case. In the $2$ by $2$ block partitioned case, we provide an explicit way to construct block-diagonal solutions to the Lyapunov inequalities without the need to solve the full inequality. An extension to $n$ by $n$ block partitioned case is one of the future work directions.

 
The rest of the paper is organised as follows. In Section~\ref{s:prel}, we cover some preliminaries and motivate our problem formulation in Section~\ref{s:motivation}. We show how to construct diagonal solutions to Lyapunov inequalities for $\mH$ drift matrices in Section~\ref{s:sdd}. We provide stability results for block partitioned matrices and link the condition for $\mH$ matrices with the small gain theorem in Section~\ref{s:bsdd}. In Section~\ref{sec:example} we provide a large-scale numerical example and we conclude in Section~\ref{s:con}, where we discuss linear programming solutions to Lyapunov inequalities.

\emph{\textbf{Notation}:} Our notation is mostly standard: $\rho(A)$ stands for the spectral radius of a matrix $A$, $A\ge 0$ (respectively, $A\gg 0$) means that all entries $a_{i j}$ of $A$ are nonnegative (respectively, positive), $A\succeq 0$ (respectively, $A\succ 0$) means that $A$ is positive semidefinite (respectively, positive definite). 
Let $\cS^n$ denote the set of symmetric $n$ by $n$ matrices, $\cS^n_+$ denotes the cone of positive semidefinite  $n$ by $n$ matrices. Let $\|B\|_2$ be the matrix induced norm, that is $\|B\|_2$ is equal to the maximum singular value of $B$, and let $\underline\sigma(B)$ denote the minimum singular value of $B$. The $\Hinf$ norm of a transfer matrix $G$ is defined as $\|G\|_{\Hinf} = \max\limits_{s\in\C: \Re(s) \ge 0} \|G(s)\|_2$  and $\|G\|_\Hinf =\max\limits_{\omega \in\R} \|G(\jmath \omega)\|_2$ for stable $G$. For a space $X$, its dual is denoted as $X^\ast$. Finally, $I$ is the identity matrix of an appropriate dimension.
\section{Preliminaries} \label{s:prel}
Consider the linear time invariant dynamical system 
\begin{equation}\label{eq:sys}
\dot{x}(t)=Ax(t), \quad x(0)=x_0
\end{equation}
where $x(t)\in \R^n$. An important concept associated with the system~\eqref{eq:sys} is stability, which is typically verified by solving a linear matrix inequality (LMI).

\begin{prop}
System \eqref{eq:sys} is stable if and only if there exists an $X\succ 0$ that satisfies the LMI 
\begin{equation}\label{eq:lyap_lmi}
A X + X A^T \prec 0.
\end{equation}
\end{prop}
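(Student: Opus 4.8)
The plan is to prove both implications of this standard Lyapunov characterisation, reading ``stable'' as asymptotic (equivalently exponential) stability, i.e.\ that every eigenvalue of $A$ lies in the open left half-plane.

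For sufficiency, suppose some $X \succ 0$ solves \eqref{eq:lyap_lmi}. Since $X$ is invertible I would set $P = X^{-1} \succ 0$ and perform a congruence transformation: multiplying $AX + XA^T \prec 0$ on both sides by $P$ gives $P(AX+XA^T)P = A^T P + PA \prec 0$, where I use $XP = PX = I$ and the fact that congruence by an invertible matrix preserves negative definiteness. I would then argue at the level of eigenvalues: for any (possibly complex) eigenpair $Av = \lambda v$ with $v \ne 0$, forming the quadratic form $v^\ast (A^T P + PA) v$ and using $A^T = A^\ast$ for real $A$ gives $v^\ast A^T P v = \bar\lambda\, v^\ast P v$ and $v^\ast P A v = \lambda\, v^\ast P v$, so the form equals $2\,\Re(\lambda)\, v^\ast P v < 0$; since $v^\ast P v > 0$ this forces $\Re(\lambda) < 0$, so $A$ is Hurwitz and \eqref{eq:sys} is stable. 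Equivalently, $V(x) = x^T P x$ is a strict Lyapunov function with $\dot V = x^T(A^T P + PA)x < 0$.

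For necessity, assume $A$ is stable and construct an explicit solution. Fixing any $Q \succ 0$ (for instance $Q = I$), I would define the Gramian-type integral $X = \int_0^\infty e^{At} Q\, e^{A^T t}\, dt$. Stability guarantees a bound $\|e^{At}\|_2 \le c\, e^{-\alpha t}$ for some $c, \alpha > 0$, so the integrand decays exponentially and the integral converges; positive definiteness of $X$ follows because the integrand is positive definite for every $t$. The verification rests on the identity $A e^{At} Q e^{A^T t} + e^{At} Q e^{A^T t} A^T = \tfrac{d}{dt}\big(e^{At} Q e^{A^T t}\big)$, which integrates to $AX + XA^T = \big[e^{At}Q e^{A^T t}\big]_0^\infty = -Q \prec 0$.

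I expect the only genuine work to sit in the necessity direction, namely justifying convergence and strict positive definiteness of the integral, both of which hinge on the exponential decay of $e^{At}$ supplied by the Hurwitz assumption. The sufficiency direction is essentially a one-line congruence followed by a quadratic-form evaluation.
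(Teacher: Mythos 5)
Your proof is correct and is the canonical textbook argument (congruence to $PA + A^TP \prec 0$ plus an eigenvector quadratic form for sufficiency, and the controllability-Gramian integral $X = \int_0^\infty e^{At}Qe^{A^Tt}\,dt$ for necessity); the paper itself states this proposition without proof as a standard preliminary, so there is nothing to diverge from. The one interpretive choice you make --- reading ``stable'' as Hurwitz/asymptotic stability --- is the right one, since the strict inequalities in \eqref{eq:lyap_lmi} fail for merely marginally stable $A$.
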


A matrix $X$ which satisfies~\eqref{eq:lyap_lmi} defines a Lyapunov function of the form $V(x)=x(t)^T X^{-1} x(t)$ for system~\eqref{eq:sys}. In this paper, we aim at describing some sufficient conditions of solvability of the LMI~\eqref{eq:lyap_lmi} when the decision variable $X$ satisfies additional sparsity constraints.

In order to simplify the presentation we say that a matrix $A\in\R^{N\times N }$ has \emph{$\alpha=\{k_1, \dots, k_n\}$-partitioning} with $N = \sum\limits_{i = 1}^n k_i$, if the matrix $A$ is written as follows
\[
A = \begin{pmatrix}
A_{1 1}    & A_{1 2}     & \dots   & A_{1 n} \\
A_{2 1}    & A_{2 2}     & \dots   & A_{2 n} \\
\vdots     & \vdots      & \ddots  & \vdots  \\
A_{n 1}    & A_{n 2}     & \dots   & A_{n n} 
\end{pmatrix}
\]
where $A_{i j}\in\R^{k_i\times k_j}$. We say that $A$ is \emph{$\alpha$-diagonal} if it is $\alpha$-partitioned and $A_{i j} = 0$ if $i\ne j$, and \emph{$\alpha$-lower triangular} if $A_{ i j} = 0$ if $i < j$. We aim at characterising \emph{$\alpha$-diagonally stable} matrices $A\in\R^{N\times N}$,  which are such that there exists an $\alpha$-diagonal positive definite $X\in\R^{N\times N}$ satisfying~\eqref{eq:lyap_lmi}. If $\alpha = \{1,\dots,1\}$, we say that an $\alpha$-diagonal (respectively, $\alpha$-lower triangular, $\alpha$-diagonally stable) matrix $A$ is \emph{diagonal} (respectively, lower-triangular, diagonally stable). 

We will make use of so-called \emph{scaled diagonally dominant} matrices.
\begin{defn}
A matrix $A \in\R^{n\times n}$ is called \emph{strictly row scaled diagonally dominant} if there exist positive scalars $d_1, \dots, d_n$ such that 
\[
d_i |a_{i i}| > \sum\limits_{j \ne i} d_j |a_{i j}|
\]
for all $i=1,\dots,n$. The matrix $A$ is \emph{strictly row diagonally dominant} if $d_i = 1$ for all $i$. 
\end{defn}

A related class to scaled diagonally dominant matrices is the class of $\mH$ matrices. In order to define this class we require the following definitions:
\begin{defn}[\cite{xiang1998weak}] \label{def:block-comp}
Given an $\alpha$-partitioned matrix $A$ with nonsingular $A_{ii}$ for all $i$, we define the \emph{$\alpha$-comparison matrix} $\cM^\alpha(A)$ as
\begin{equation}
\cM^\alpha_{ij}(A) = \left\{\begin{array}{ll} \|A_{ i i}^{-1}\|_2^{-1} &  \text{if }i = j, \\
-\|A_{i j}\|_2 & \textrm{otherwise},
\end{array} \right.\label{block-comparison}
\end{equation}
When $\alpha = \{1,\dots,1\}$, we will simply  write $\cM(A)$. 
\end{defn}

Note that $ \|A_{ i i}^{-1}\|_2^{-1} = \underline \sigma(A_{i i})$. Hence using a continuity argument we can assume that $\|A_{ i i}^{-1}\|_2^{-1} =0$ for a singular $A_{i i}$, and Definition~\ref{def:block-comp} is well-posed. 
The $\alpha$-partitioned matrices allow a version of \emph{Gershgorin circle theorem:
\begin{prop}[\cite{feingold1962block}]\label{prop:block-gershgorin}
For an $\alpha$-partitioned matrix $A\in\R^{N\times N}$, where $\alpha = \{k_1, \dots, k_n\}$ and $N = \sum\limits_{i = 1}^n k_i$, every eigenvalue of $A$ satisfies 
\begin{gather*}
\|(\lambda I - A_{ i i})^{-1}\|_2^{-1} \le \sum\limits_{j = 1, j\ne i}^n \|A_{i j}\|_2
\end{gather*}
for at least one $i$ where $i = 1, \dots, n$. 
\end{prop}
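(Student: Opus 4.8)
The plan is to lift the classical ``largest-component'' proof of the scalar Gershgorin theorem to the block setting. Let $\lambda$ be an eigenvalue of $A$ with eigenvector $v\ne 0$, and partition $v=(v_1^T,\dots,v_n^T)^T$ conformally with $\alpha$, so that $v_i\in\R^{k_i}$. Reading off the $i$-th block row of the identity $Av=\lambda v$ gives $\sum_{j=1}^n A_{ij}v_j=\lambda v_i$, which I would rearrange into
\[
(\lambda I - A_{ii})\,v_i=\sum_{j\ne i}A_{ij}v_j.
\]
The crucial choice is then to select the block index $i$ that maximises $\|v_i\|_2$; since $v\ne 0$, this index satisfies $\|v_i\|_2>0$, which will be essential for dividing at the end.

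For this choice of $i$, I would bound the two sides against each other. Applying the triangle inequality and submultiplicativity of the induced $2$-norm to the right-hand side yields
\[
\|(\lambda I - A_{ii})v_i\|_2 \le \sum_{j\ne i}\|A_{ij}\|_2\,\|v_j\|_2 \le \Big(\sum_{j\ne i}\|A_{ij}\|_2\Big)\|v_i\|_2,
\]
the last step using $\|v_j\|_2\le\|v_i\|_2$ for all $j$ by maximality. For a matching lower bound on the left-hand side I would invoke the elementary estimate $\|Mx\|_2\ge\underline\sigma(M)\|x\|_2$, valid for any matrix $M$ and vector $x$; with $M=\lambda I - A_{ii}$ this gives $\|(\lambda I - A_{ii})v_i\|_2\ge\underline\sigma(\lambda I - A_{ii})\|v_i\|_2$. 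Combining the two inequalities and cancelling the strictly positive factor $\|v_i\|_2$ produces $\underline\sigma(\lambda I - A_{ii})\le\sum_{j\ne i}\|A_{ij}\|_2$, and since $\underline\sigma(\lambda I - A_{ii})=\|(\lambda I - A_{ii})^{-1}\|_2^{-1}$ (with the convention noted after Definition~\ref{def:block-comp} that this equals $0$ when $\lambda I - A_{ii}$ is singular), this is precisely the claimed bound.

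The argument is essentially routine, being the direct block analogue of the scalar case, where $\underline\sigma(\lambda - a_{ii})=|\lambda - a_{ii}|$ recovers the usual Gershgorin disk. There is no genuine obstacle: the only points requiring care are verifying that the selected block is nonzero (handled by maximality together with $v\ne 0$) and the treatment of a singular $\lambda I - A_{ii}$, which is painless because the lower bound $\|Mx\|_2\ge\underline\sigma(M)\|x\|_2$ holds regardless of invertibility. Thus the use of the minimum singular value rather than the reciprocal inverse norm is what makes the whole chain of inequalities go through uniformly.
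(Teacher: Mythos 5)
Your proof is correct and is the standard Feingold--Varga argument (largest block component plus the bound $\|Mx\|_2\ge\underline\sigma(M)\|x\|_2$); the paper itself does not prove Proposition~\ref{prop:block-gershgorin} but imports it directly from \cite{feingold1962block}, so there is nothing to diverge from. The only cosmetic point is that for a complex eigenvalue the eigenvector blocks live in $\C^{k_i}$ rather than $\R^{k_i}$, but every step of your chain of inequalities holds verbatim over $\C$ with the Hermitian $2$-norm.
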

}

\begin{defn}
A matrix $A\in \R^{n\times n}$ is said to be  \emph{Metzler} if all the off-diagonal elements are positive.
\end{defn}
\begin{defn}
 A matrix $A\in \R^{n\times n}$ is said to be an $\mH$ matrix, if the minimal real part of the eigenvalues of $\cM(A)$ is greater than or equal to zero.
\end{defn}

It is clear that stable Metzler matrices are also $\mH$ matrices. It is also straightforward to show that $A$ is strictly row and column scaled diagonally dominant if and only if $\cM(A)$ has eigenvalues with positive real part~\cite{varga1976recurring}. We also refer the reader to~\cite{liu2004some}, \cite{hershkowitz1985lyapunov} for additional information on $\mH$ matrices.  

Let $\cDDp$ denote the cone of matrices $A$ such that $A$ and $A^T$ are strictly diagonally dominant and the elements on the diagonal of $A$ are positive (that is, $A_{i i} >0$). Similarly, let $\mHp$ denote $\cH$ matrices $A$ with positive elements on the diagonal of $A$.
If $A$ is a symmetric $\cDDp$ matrix, then by Proposition~\ref{prop:block-gershgorin} with $\alpha = \{1,\dots, 1\}$, 
it is easy to show that $A\succ 0$. Moreover, the constraint that $A = A^T \in \cDDp$ can be written as a set of linear constraints
\begin{equation}
\begin{aligned} \label{constraint-ddp}
&a_{i i} > \sum\limits_{j\neq i}^n c_{i j}~~\forall i, \\
& -c_{i j} \le a_{i j} \le c_{i j}, \textrm{ and } c_{i j} = c_{j i} ~~\forall i\ne j.
\end{aligned}
\end{equation}
Hence, a constraint $A\succ 0$ can be replaced by a more restrictive but scalable linear constraints. This approach was proposed in~\cite{dsos_ciss14,MajAT14} to restrict some sum-of-squares optimisation problems which are naturally SDPs to LPs. 

A symmetric $\mHp$ matrix is also positive semidefinite, and this constraint can be imposed by a number of second order cone constraints~\cite{boman2005factor}. That is $A=A^T\in \mHp$ if and only if
\begin{gather*}
A = \sum\limits_{i = 1}^N E_i^T X_i E_i, \text{  with  } X\in \cS^2_+,\, E_i \in \cT_2
\end{gather*}
where $E\in\cT_2$, if $E\in\R^{n\times 2}$ and every column of $E$ has only one non-zero element equal to one, and $N = |\cT_2|$. 

To summarise the subsection, we will mention this strong result on diagonal stability of $\mHp$ matrices, which we will revisit in the sequel. 

\begin{prop}[\cite{hershkowitz1985lyapunov}]\label{prop:known-res}
Let $-A$ be an $\mHp$ matrix. Then $A$ is diagonally stable if and only if $A$ is nonsingular. 
\end{prop}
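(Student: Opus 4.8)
The plan is to treat the two implications separately, with essentially all the work in the sufficiency direction. Necessity is immediate: if $A$ is diagonally stable then \eqref{eq:lyap_lmi} holds for some $X\succ 0$, so $A$ is Hurwitz and in particular $0\notin\spec(A)$, whence $A$ is nonsingular.

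For sufficiency, write $B=-A$, so that $B\in\mHp$ (positive diagonal, and $\cM(B)$ has eigenvalues with nonnegative real part) and $B$ is nonsingular; the goal is a diagonal $X\succ 0$ with $BX+XB^T\succ 0$. The key structural fact I would establish first is a comparison inequality: for every diagonal $D\succ 0$ and every $x\in\R^n$, writing $y=(|x_1|,\dots,|x_n|)^T$,
\[
x^T(BD+DB^T)x \;\ge\; y^T\big(\cM(B)D+D\cM(B)^T\big)y ,
\]
which follows by expanding both quadratic forms, bounding each cross term via $(b_{ij}d_j+b_{ji}d_i)x_ix_j\ge-(|b_{ij}|d_j+|b_{ji}|d_i)|x_i||x_j|$, and using $\cM_{ii}(B)=b_{ii}$, $\cM_{ij}(B)=-|b_{ij}|$. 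This reduces diagonal stability of $B$ to finding a diagonal scaling that makes the symmetrised comparison matrix positive (semi)definite. When $\cM(B)$ is a nonsingular M-matrix — equivalently, by the remark following the definition of $\mH$ matrices, when $B$ is strictly row and column scaled diagonally dominant — such a scaling exists: the classical diagonal stability of nonsingular M-matrices produces $D\succ 0$ with $\cM(B)D+D\cM(B)^T\succ 0$, and the comparison inequality then forces $BD+DB^T\succ 0$, so $X=D$ works.

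The main obstacle is that $A$ nonsingular does \emph{not} force $\cM(B)$ to be nonsingular. For instance $B=\left(\begin{smallmatrix}1&1\\-1&1\end{smallmatrix}\right)$ is nonsingular with $\cM(B)=\left(\begin{smallmatrix}1&-1\\-1&1\end{smallmatrix}\right)$ singular, yet $B$ is diagonally stable ($B+B^T=2I\succ 0$) because the skew part cancels in the symmetrisation — a gain the comparison bound alone cannot detect. To handle this genuinely hard case I would proceed in two steps. First, establish semistability: using a positive Perron-type scaling $D$ for which $\cM(B)D$ is weakly diagonally dominant (available since $\cM(B)$ is an M-matrix with positive diagonal), the diagonal semistability of positive-diagonal M-matrices gives $\cM(B)D+D\cM(B)^T\succeq 0$, and the comparison inequality yields $BD+DB^T\succeq 0$; the positive diagonal of $\cM(B)$ together with nonsingularity of $B$ is precisely what rules out a defective zero eigenvalue and keeps this semidefinite rather than indefinite. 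Along the way, the same scaled weak diagonal dominance fed into the block Gershgorin estimate of Proposition~\ref{prop:block-gershgorin} shows every eigenvalue of $B$ lies in the closed right half-plane and can equal $0$ only if it \emph{is} $0$, so nonsingularity makes $B$ positive stable (equivalently $A$ Hurwitz).

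Second, I would upgrade $\succeq$ to $\succ$ by contradiction through a theorem of alternatives for feasibility of $BX+XB^T\succ 0$ over diagonal $X\succ 0$, which produces a nonzero $P\succeq 0$ with all diagonal entries of $PB$ nonpositive. Pairing $P$ against the semidefinite certificate gives $\tr\big(P(BD+DB^T)\big)=2\sum_i(PB)_{ii}d_i\le 0$ while both matrices are positive semidefinite, forcing $\tr\big(P(BD+DB^T)\big)=0$ and hence $P(BD+DB^T)=0$. Chasing the resulting common null structure back through the comparison inequality pins $B$ down to a singular principal submatrix — which, since principal submatrices inherit the $\mHp$ property, contradicts the nonsingularity of $B$. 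I expect this final upgrade, namely extracting a contradiction with nonsingularity out of the boundary (semidefinite) case when the comparison matrix is singular, to be the crux of the whole argument.
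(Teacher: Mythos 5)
The paper does not actually prove Proposition~\ref{prop:known-res} --- it is quoted verbatim from~\cite{hershkowitz1985lyapunov} --- so your attempt can only be judged on its own merits. Your necessity argument and the easy sufficiency case (nonsingular $\cM(A)$, which is essentially Theorem~\ref{prop:h-mat-lyap}) are fine, and the comparison inequality and the Barker--Berman--Plemmons alternative are both correctly stated. The hard case, however, breaks at Step~1. You claim that since $\cM(B)$ is an M-matrix with positive diagonal there is a positive diagonal $D$ making $\cM(B)D$ weakly diagonally dominant, hence $\cM(B)D+D\cM(B)^T\succeq 0$, hence $BD+DB^T\succeq 0$. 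Consider
\[
B=\begin{pmatrix}1&0&0\\ 1&1&-1\\ 1&1&1\end{pmatrix},\qquad \cM(B)=\begin{pmatrix}1&0&0\\ -1&1&-1\\ -1&-1&1\end{pmatrix}.
\]
Here $-B$ satisfies the hypotheses of the proposition: $\cM(B)=I-P$ with $\rho(P)=1$, so $\cM(B)$ is a (singular, reducible) M-matrix, and $\det B=2\neq 0$. No $d\gg 0$ with $\cM(B)d\ge 0$ exists (rows $2$ and $3$ sum to $-2d_1\ge 0$). Worse, $\cM(B)$ is not Lyapunov diagonally \emph{semi}stable at all: the $\{2,3\}$ principal minor of $Q=\cM(B)D+D\cM(B)^T$ equals $-(d_2-d_3)^2$, forcing $d_2=d_3$, and then $v=(0,1,1)^T$ gives $v^TQv=0$ while $Qv=(-2d_1,0,0)^T\neq 0$, contradicting $Q\succeq 0$. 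So the comparison-matrix route cannot produce the semidefinite certificate $BD+DB^T\succeq 0$ on which all of Step~2 rests --- even though this $B$ \emph{is} diagonally stable ($B+B^T\succ 0$ with $D=I$), precisely because of the sign cancellations that the comparison bound discards. The lossiness you flagged in your $2\times 2$ example is thus not a cosmetic nuisance confined to the strict/non-strict boundary; it defeats the reduction to $\cM(B)$ outright in the reducible singular case.

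Even granting a certificate $BD+DB^T\succeq 0$, the concluding step of Step~2 is a non sequitur: exhibiting a singular principal submatrix $B[\alpha]$ does not contradict nonsingularity of $B$, and the observation that principal submatrices inherit the $\mHp$ property does nothing to repair this --- nonsingular matrices routinely have singular principal submatrices. What is actually needed is to show that the support $\alpha$ extracted from the equality analysis is a union of full irreducible diagonal blocks in the Frobenius normal form of $B$ (equivalently, of the singular classes of $\cM(B)$), so that $\det B$ factors through $\det B[\alpha]$ and singularity of the block forces singularity of $B$. That reduction to irreducible singular classes, together with the characterisation of when an equimodular perturbation of a singular irreducible M-matrix stays singular, is the real content of the Hershkowitz--Schneider proof and is absent from your sketch.
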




\section{Motivation and Problem Formulation}\label{s:motivation}

\subsection{Conic Programming}
Conic optimisation problems take the generic form of optimising a linear functional over the intersection of an affine subspace and a proper cone. Typically conic programmes have the following primal and dual formulations:
\begin{align*}
\text{minimise}  &\quad c^Tx  &\text{maximise} & \quad b^T y\\
\text{s.t.} \quad &Ax = b     &\text{s.t.} \quad & c - A^T y =s \\
&x \in \cK \quad & & (y,s)\in (\R^m, \cK^{\ast})
\end{align*}
where $\cK$ is a \emph{proper} cone (i.e. closed, non-empty, pointed, convex) and $\cK^{\ast}$ is the dual cone of $\cK$ defined as 
\begin{equation*}
\cK^{\ast}:= \left\{y~|~ \langle y,x \rangle \ge 0, \forall x \in \cK \right\}.
\end{equation*}
It is well known, that the cone of positive semidefinite matrices $\cS_+^n$ is self dual meaning that $(\cS_+^n)^\ast = \cS_+^n$. The cone of symmetric $\cDDp$ matrices
\begin{gather*}
\cK_{\rm LP} = \left\{X \in\cS^n\cap \cDDp \right\},
\end{gather*}
however, is not self-dual and it is larger than the cone $\cS_+^n$. More specifically:
\begin{gather*}
\cK_{\rm LP}^\ast = \left\{ X\in \cS^n \bigl| v_i^T X v_i\ge 0, ~\forall v_i\in \cT_1 \right\},
\end{gather*}
where $\cT_1$ is the set of all vectors in $\R^n$ with a maximum of two non-zero elements, each of which is $\pm1$. The cone of symmetric $\mHp$ matrices defined as
\begin{gather*}
\cK_{\rm SOCP} = \left\{X \in\cS^n\cap \mHp \right\},
\end{gather*}
has the dual 
\begin{gather*}
\cK_{\rm SOCP}^\ast = \left\{X \in\cS^n\bigl| E_i^T X E_i \succeq 0,~\forall E_i \in \cT_2  \right\}.
\end{gather*}
We will make use of these cones and their duals in the remainder of the paper.

\subsection{Structured Gramians via Basis Pursuit}
The standard from primal SDP~\cite{BoyGFBV94} is written as
\begin{align}
\min_X \quad& \langle C,X\rangle \label{eq:SDPprimal}\\
\text{s.t} \quad&X\in \mathcal{S}_n^+, \quad  \langle A_i,X\rangle = b_i, \quad i = 1,\hdots,m \nonumber
\end{align} 
where $\mathcal{S}_n^+$ is the cone of $n\times n$ positive semidefinite matrices. The basis pursuit algorithm proceeds as follows: At each iteration the algorithm re-parameterizes the simpler cone that approximates $\mathcal{S}_n^+$ and then solves an optimization problem over this cone, the solution of which is then used to update the cone for the next iteration. 
In particular, the algorithm specifies for a fixed matrix $L$, the cone
\begin{equation*}
\mathcal{K}(L) = \left\{ X  ~|~ X = L^TQL,~Q = Q^T \in\cDDp \right\}.
\end{equation*}
Note that $Z\in \mathcal{K}(L) \Rightarrow Z \succeq 0$. The algorithm in \cite{basis_pursuit}  solves a sequence of optimization problems of the form \eqref{eq:SDPprimal} but with the conic constraint replaced by $X\in \mathcal{K}(L_k)$ where the sequence $\left\{L_k \right\}$ is given by
\begin{align*}
L_0 &= I\\
L_{k+1} &= \text{decomp}(X_k)
\end{align*}
where $\text{decomp}(X_k)$ is a Cholesky decomposition of $X_k$, the optimal solution decision variable from iteration $k$. In some cases, $X_k$ can have singular values close to zero, thus creating numerical problems in the iterative procedure. In order to avoid such cases, we can remove a $k$-th column of $L$ with the $k$-th entry close to zero (we assume that $L$ is lower triangular). One can also use LDL decomposition to avoid dealing with negative eigenvalues of $X_k$. This approach also slightly improves numerical complexity of the conic programme by lowering the number of decision variables and constraints. We finally note that the method relies on the fact that at the first iteration a feasible solution $X_0$ exists. 

In many applications, it is desirable to solve the following problem using an LP or SOCP rather than the more natural SDP:
\begin{equation}\label{main-lmi}
\begin{aligned}
 \min ~&\tr(X) \\
\text{such that:}~& A X + X A^T \prec 0 \\
                   & X=X^T \text{ is $\alpha$-diagonal},
\end{aligned}
\end{equation}
where $A$ is Hurwitz, and $\alpha$ is a given partitioning. Note that since $A$ is Hurwitz, then the condition $X\succ 0$ is implied by the solvability of~\eqref{main-lmi}. The basis pursuit can be applied given an $X$ satisfying the constraints of the programme~\eqref{main-lmi}. Therefore, we set up our problem: \emph{find $X$ satisfying the constraints of~\eqref{main-lmi} with algebraic or linear programming methods}.

There are many practical applications, where the problem of the form~\eqref{main-lmi} appears and one of them is structured model reduction. For example, consider the boiler-header system described in~\cite{trnka2013structured} 
and schematically depicted in Figure~\ref{fig:boiler}. 
The state space can be partitioned according to dimensions of the subsystems, which are $\{3,~3,~1\}$. The system always admits diagonal generalised Gramians, since the drift matrix of the closed loop system is a stable $\mHp$ matrix. 
\begin{figure}[t]
  \centering
\includegraphics[width = 0.5\columnwidth]{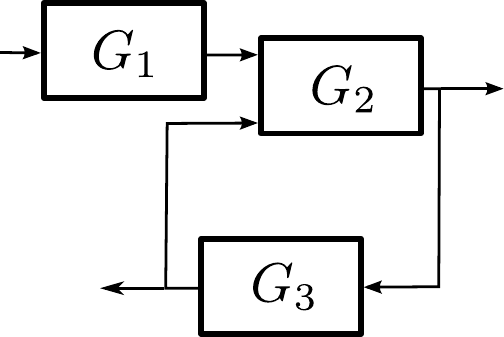}
  \caption{Block-diagram of the boiler-header system.}
  \label{fig:boiler}
\end{figure}

In order to perform structured model reduction~\cite{trnka2013structured} the authors computed a $\{3,3,1\}$-diagonal generalised controllability Gramian $P = \diag{P_1, P_2, P_3}$ such that $P_1, P_2 \in\R^{3\times 3}$ and $P_3 \in \R$. The optimal trace of such a Gramian computed using semidefinite programming is equal to $1.2817\cdot 10^4$. Using linear programming, we found the minimum trace of $2.4132\cdot 10^4$ signifying a loss of quality of almost $100\%$. After solving one iteration of the basis pursuit algorithm we obtain objective equal to $1.5893 \cdot 10^4$, an additional iteration of the algorithm gives $1.3172 \cdot 10^4$, and one more provides a value equal to $1.3093 \cdot 10^4$, which comes really close to the optimal value. Naturally, on this example we do not need basis pursuit or linear programming to obtain an optimal solution due to the low complexity of the problem. However, this example indicates that the basis pursuit algorithm can be beneficial to obtain an approximate solution of large scale Lyapunov inequalities using linear programmes. 

\section{$\mH$ Matrices and Diagonal Stability} \label{s:sdd}
The main result of this section concerns diagonal stability of $\mH$ matrices, where we sharpen the results from~\cite{hershkowitz1985lyapunov} by providing an explicit diagonal Lyapunov function for a class of $\mHp$ matrices.
\begin{thm} \label{prop:h-mat-lyap}
Let $-A$ be an $\mHp$ matrix with a nonsingular $\cM(A)$. Then the following conditions hold
\begin{enumerate}
\item There exist positive vectors $v = \begin{pmatrix} v_1 & \dots & v_n \end{pmatrix}^T$, $w = \begin{pmatrix} w_1 & \dots & w_n \end{pmatrix}^T$ such that $\cM(A) v$, $w^T \cM(A)$ are also positive.
\item There exists a diagonal $X$ such that $- (A X + X A^T)$ is an $\mHp$ matrix. Moreover, we can choose it as $X = P_v P_w^{-1}$, where $P_v =\diag{v_1, \dots, v_n}$, $P_w =\diag{w_1, \dots, w_n}$, and $v$, $w$ satisfy point 1).
\item There exists a diagonal positive definite matrix $Y$ such that  
\begin{gather}\label{con:linear}
-P_w A P_w^{-1} Y  -  Y P_w^{-1} A^T P_w\in \cDDp
\end{gather}
\end{enumerate}
\end{thm}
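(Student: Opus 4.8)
The plan is to reduce all three claims to the theory of nonsingular M-matrices applied to the comparison matrix $\cM(A)$, and then to propagate a single scaled-diagonal-dominance estimate through the three parts. First I would note that $\cM(A) = \cM(-A)$ is a Z-matrix (nonpositive off-diagonal, positive diagonal), and that the hypothesis ``$-A\in\mHp$ with $\cM(A)$ nonsingular'' says exactly that every eigenvalue of $\cM(A)$ has nonnegative real part while $0$ is not an eigenvalue. Writing $\cM(A) = sI - P$ with $P\ge 0$, the minimal real part of the spectrum equals $s-\rho(P)$, so nonsingularity upgrades ``$\ge 0$'' to ``$>0$'' and $\cM(A)$ is a nonsingular M-matrix. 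Point 1 then follows from the standard characterisation: there is a positive $v$ with $\cM(A)v\gg 0$, and applying the same fact to the transpose Z-matrix $\cM(A)^T$ (same spectrum) yields a positive $w$ with $w^T\cM(A)\gg 0$. In coordinates these read $|a_{ii}|v_i > \sum_{j\ne i}|a_{ij}|v_j$ and $|a_{ii}|w_i > \sum_{j\ne i}|a_{ji}|w_j$ for all $i$.

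For point 2, I would set $X = P_v P_w^{-1} = \diag{v_1/w_1,\dots,v_n/w_n}\succ 0$ and $B = -(AX + XA^T)$. Since $X$ is diagonal, $B$ is symmetric; its diagonal is $B_{ii} = -2a_{ii}x_i > 0$ (because $a_{ii}<0$), and off-diagonally $|B_{ij}|\le |a_{ij}|x_j + |a_{ji}|x_i$ by the triangle inequality. The crux is to produce a positive vector certifying that $\cM(B)$ is a nonsingular M-matrix, and I claim $w$ works: using $x_jw_j = v_j$ and $x_i = v_i/w_i$, $(\cM(B)w)_i = |B_{ii}|w_i - \sum_{j\ne i}|B_{ij}|w_j \ge 2|a_{ii}|v_i - \sum_{j\ne i}|a_{ij}|v_j - \tfrac{v_i}{w_i}\sum_{j\ne i}|a_{ji}|w_j$, and the row and column dominance inequalities from point 1 bound each of the two sums by $|a_{ii}|v_i$, so $(\cM(B)w)_i > 0$. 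Hence $\cM(B)w\gg 0$, $\cM(B)$ is a nonsingular M-matrix, and with the positive diagonal this makes $B$ an $\mHp$ matrix.

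For point 3, I would avoid any new computation by recognising the target as a congruence of $B$. Taking $Y = P_v P_w\succ 0$ and writing $\tilde A = P_w A P_w^{-1}$, a direct diagonal-matrix manipulation gives $-\tilde A Y - Y\tilde A^T = P_w B P_w$, which is symmetric. Its strict row diagonal dominance, namely $w_i|B_{ii}| > \sum_{j\ne i} w_j|B_{ij}|$, is precisely the inequality $\cM(B)w\gg 0$ already established for point 2, and symmetry supplies column dominance for free; together with $B_{ii}>0$ this places $P_w B P_w$ in $\cDDp$, which is exactly~\eqref{con:linear}.

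I expect the main obstacle to be the off-diagonal estimate in point 2 — specifically, matching the row-dominance inequality (weighted by $v$) and the column-dominance inequality (weighted by $w$) to the two halves of the bound on $|B_{ij}|$, which is what forces the asymmetric scaling $X = P_v P_w^{-1}$ rather than a single diagonal factor. A secondary point requiring care is the spectral bookkeeping that turns ``nonnegative real part plus nonsingular'' into a genuine nonsingular M-matrix, since without the nonsingularity hypothesis the vectors in point 1 would only be guaranteed nonnegative and the strict inequalities driving points 2 and 3 would fail.
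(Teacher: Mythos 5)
Your proof is correct, and its skeleton coincides with the paper's: part 1 via the nonsingular-M-matrix characterisation of $\cM(A)$ (the paper invokes the Metzler-matrix result of Rantzer, you invoke the equivalent Perron--Frobenius fact for $\cM(A)=sI-P$), the same choice $X=P_vP_w^{-1}$ for part 2, and the same $Y=P_vP_w$ for part 3. Where you genuinely diverge is in how part 2 is closed. The paper first shows $\cM(A)X+X\cM(A^T)\succ 0$ via the certificate $(\cM(A)X+X\cM(A^T))w\gg 0$, then transfers positive definiteness to $\cM(-AX-XA^T)$ by an entrywise comparison of the two comparison matrices together with Wielandt's monotonicity of the spectral radius. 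You instead verify the single entrywise inequality $\cM(B)w\gg 0$ for $B=-(AX+XA^T)$ directly, splitting the triangle-inequality bound $|B_{ij}|\le |a_{ij}|x_j+|a_{ji}|x_i$ into a piece controlled by the row certificate $\cM(A)v\gg 0$ and a piece controlled by the column certificate $w^T\cM(A)\gg 0$. This is more elementary (no spectral-radius comparison needed) and has the pleasant side effect that part 3 becomes an immediate corollary: $-P_wAP_w^{-1}Y-YP_w^{-1}A^TP_w=P_wBP_w$, and the row dominance $w_iB_{ii}>\sum_{j\ne i}w_j|B_{ij}|$ is literally the inequality $\cM(B)w\gg 0$ you already proved, with symmetry giving column dominance; the paper instead re-derives $\cDDp$ membership from scratch via $R=P_w\cM(A)P_v$. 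The paper's route, on the other hand, isolates the reusable comparison principle $\cM(A)X+X\cM(A^T)\le \cM(-AX-XA^T)$, which is the natural lemma to reach for when attempting the $\alpha$-partitioned generalisation discussed at the end of Section V.
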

\begin{proof} 
1) By definition $-\cM(A)$ is a Metzler matrix with all eigenvalues $\lambda_i(\cM(A)) \le 0$, since $\cM(A)$ is nonsingular by the premise, $-\cM(A)$ is a Hurwitz Metzler matrix. Hence the claim follows by applying the results from~\cite{rantzer2015ejc}.

2) Let $X = P_v P_w^{-1}$, then 
\begin{multline*}
(\cM(A) X + X \cM(A^T)) w = (\cM(A) v + X  \cM(A)^T w ) \gg 0,
\end{multline*}
where the inequality follows since $\cM(A) v$ and $\cM(A)^T w$ are positive and $X$ is nonnegative. Hence $S = -\cM(A) X - X \cM(A^T)$ is a Metzler matrix and there exists a positive vector such that $S w$ is negative. This implies that $S$ is a symmetric Hurwitz and Metzler matrix, which means that $\cM(A) X + X \cM(A^T)$ is positive definite.

Note that $a_{i i} < 0$ for all $i$, let
\begin{gather*}
(-A X - X A^T)_{i j} = - a_{i j} x_j - a_{j i} x_i \\
(\cM(A) X + X \cM(A^T))_{i j} = 
\begin{cases} - a_{i j} x_j - a_{j i} x_i  & i = j \\
- |a_{i j}| x_j - |a_{j i}| x_i            & i \ne j
\end{cases}
\end{gather*}

It is straightforward to show that $\cM(A) X + X \cM(A^T) \le \cM(- A X - X A^T)$, moreover the elements on the diagonal are equal. This means that we can write $\cM(A) X + X \cM(A^T) = s I - R_1$, $\cM(- A X - X A^T) = s I -R_2$, where the matrices $R_1$ and $R_2$ satisfy $R_1\ge R_2 \ge 0$. According to Weilandt's theorem  $\rho(R_1) \ge \rho(R_2)$ (cf.~\cite{hershkowitz1985lyapunov}). Therefore the minimal eigenvalue of $\cM(A) X + X \cM(A^T)$ is smaller or equal to the minimal eigenvalue of $\cM(- A X - X A^T)$. This implies that $\cM(- A X - X A^T)$ has eigenvalues with positive real part, hence $- A X - X A^T$ is an $\mHp$ matrix.

3) Consider the matrix $R = P_w \cM(A) P_v$, and $e$ the vector of ones. Now it is easy to see that $R e \gg 0$:
\begin{gather*}
P_w \cM(A) P_v e = P_w \cM(A) v  \gg 0.
\end{gather*}
This implies that the matrix $P_w \cM(A) P_v$ is row strictly diagonally dominant. Similarly, we can show that $P_w \cM(A) P_v$ is column strictly diagonally dominant. This by definition implies that the matrix $-P_w A P_v$ is a row and column diagonally dominant matrix with positive elements on the diagonal or a $\cDDp$ matrix. Hence the matrix $-P_w A P_v - P_v A^T P_w$ is positive definite. Since we can set $Y=P_v P_w$ the result follows.
\end{proof}

We showed that there exists a diagonal $X$ matrix such that the matrix $Z = -P_w A P_w^{-1} X - X P_w^{-1} A^T P_w$ is a $\cDDp$ matrix and hence positive definite. Note that the constraint $Z =Z^T \in\cDDp$ is linear and if needed we can relax the sparsity constraints on $X$. This implies that given an $\mH$ drift matrix, we can compute an $\alpha$-diagonal Lyapunov function with an \emph{arbitrary $\alpha$} using \emph{linear programming}.

If the entries of the $A$ matrix are poorly scaled then solving a linear programme can be numerically challenging. Using our methods, this can be avoided if we compute an initial point using the right and left eigenvectors of $\cM(A)$, instead of the positive vectors $v$ and $w$ satisfying point 1). Having an initial point re-scales the optimisation programme and can provide feasible points as shown on a specific example in~\cite{sootla2015stoch}.

Theorem~\ref{prop:h-mat-lyap} is a direct generalisation of the similar result for Metzer matrices (cf.~\cite{rantzer2015ejc}), but our result can be applied to a broader class of matrices including lower-triangular matrices. Using Theorem~\ref{prop:h-mat-lyap} other results for Metlzer matrices can be extended to problems such as construction of sum- and max-separable Lyapunov functions (cf.~\cite{rantzer2015ejc}).

The state-space transformation $P_w$ is essential in order to guarantee the diagonal dominance of the inequality. Consider an asymptotically stable matrix 
\begin{gather*}
A = \begin{pmatrix}
 -1 & -2 \\
 2  & -5
\end{pmatrix}.
\end{gather*} 
and a positive definite $X = \diag{x_1, x_2}$. The matrix $-A$ is an $\cH_+$ matrix and it is stable. The diagonal dominance of $A X + X A^T$ requires the following inequalities to be fulfilled
\begin{gather*}
2\cdot x_1 > 2 x_2 +  2 x_1, \qquad 2\cdot 5 x_2 > 2 x_2 + 2 x_1 
\end{gather*} 
for some positive $x_1$, $x_2$. The first inequality is equivalent to $0 > 2 x_2$, which is impossible to fulfil.


\section{$\alpha$-Diagonal Stability and $\mHp$ Matrices\label{s:bsdd}}

\subsection{A Motivating Example}
In this section, we cover two main classes of results for diagonal stability and compare them to a classical example from~\cite{arcak2006diagonal} for cyclic systems. These classes stem from two arguments based on the  \emph{passivity}  and the \emph{small gain theorem}. In this section, we will argue that the $\mHp$ matrix condition is an implicit constraint  in these stability proofs. In order to explain our motivation consider an example studied in~\cite{arcak2006diagonal} and let:
\[
A_n^0 = \begin{pmatrix}
0_{1 \times n-1}   &-\beta_1\\
\diag{\beta_2,\dots, \beta_n} & 0_{n-1\times 1}
\end{pmatrix} -\diag{\alpha_1, \dots, \alpha_n}
\]
where $\alpha_i$, $\beta_i$ are positive scalars. This matrix represents the dynamics of a negative feedback of a cascade of transfer functions $G_i(s) = \frac{\beta_i}{s + \alpha_i}$.  First, let us consider the $2$ by $2$ case, which gives 
\[
A_{2}^0 = \begin{pmatrix}
 -\alpha_1&  -\beta_1     \\
  \beta_2 & -\alpha_2   
\end{pmatrix}
\]
and two transfer functions $G_1 =\frac{ \beta_1}{(s + \alpha_1)}$ and $G_2 = \frac{\beta_2}{(s+\alpha_2)}$. According to the small gain theorem, the system is stable if
\[
\left\| \frac{\beta_1}{s + \alpha_1} \right\|_\Hinf  \left\| \frac{\beta_2}{s + \alpha_2} \right\|_\Hinf  =  \frac{\beta_1 \beta_2}{\alpha_1 \alpha_2} < 1.
\]
This argument can be extended to an arbitrary size matrix resulting in the condition 
\begin{equation}\label{cond-h-mat}
\frac{\beta_1 \cdots \beta_n }{\alpha_1 \cdots \alpha_n} < 1.
\end{equation}

Surprisingly, it is straightforward to verify by definition that $A_n^0$ is an $\mHp$ matrix if and only if~\eqref{cond-h-mat} holds. Hence on this loop \emph{the $\mHp$ matrix condition is a small gain condition}. Alternatively, using passivity arguments it was shown in~\cite{arcak2006diagonal}, that $A_n^0$ is asymptotically stable if and only if

\begin{equation}\label{cond-diag-stab}
\frac{\beta_1  \cdots \beta_n }{\alpha_1 \cdots \alpha_n} < (\sec(\pi/n))^n.
\end{equation}

This in particular means that for $n =2$ all matrices in the form $A_2^0$ are not only stable, but also diagonally stable, however, they may not be $\mH$ matrices. This analysis is based on passivity arguments and has been extended to less restrictive classes of systems in~\cite{arcak2011diagonal}.  It is easy to verify that with $n\rightarrow\infty$  the limit $(\sec(\pi/n))^n$ converges to one. Hence, it appears (for this class of system) that for large dimensions, $\mH$ matrices constitute a large subset of diagonally stable matrices.  We will pursue the relation between $\mHp$ matrices and small gain argument in the $\alpha$-diagonal case in the remainder of the paper. 

\subsection{Passivity and Small Gain Conditions for $\alpha$-Diagonal Stability}
\begin{figure}[t]
\centering
\includegraphics[width=0.5\columnwidth]{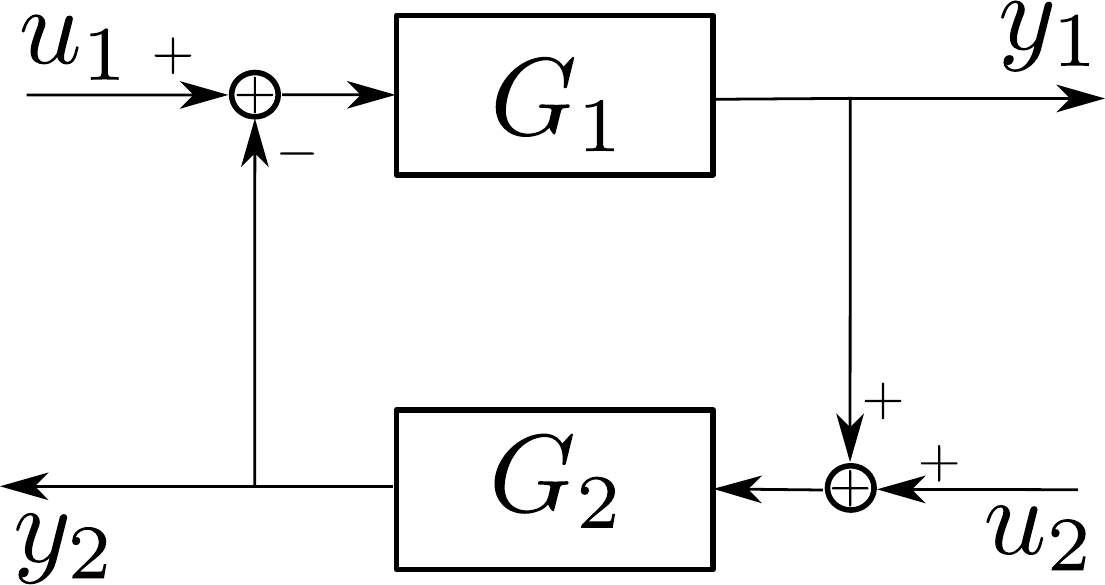}
\caption{Feedback interconnection of two stable systems $G_1$, $G_2$.} \label{fig:feedback}
\end{figure}

Let $G^c$ be the closed loop transfer function depicted in Figure~\ref{fig:feedback}, which is an interconnection of two Linear Time Invariant (LTI) subsystems 
\[
G_i = \left[\begin{array}{c|c}
A_i & B_i \\
\hline
C_i & D_i
\end{array}\right],
\]
where $A_i\in \R^{k_i \times k_i}$, $B_i\in \R^{k_i \times m_i}$, $C_i\in \R^{l_i \times k_i}$, $D_i\in \R^{l_i \times m_i}$ and $m_1 = l_2$, $m_2 = l_1$. The closed loop transfer function from $[u_1,u_2]$ to $[y_1,y_2]$ has the following state-space realisation
\[
G^c = \left[\begin{array}{cc|cc}
A^c_{1 1} & A_{1 2}^c & B_{1 1}^c & B_{1 2}^c\\
A^c_{2 1} & A_{2 2}^c & B_{2 1}^c & B_{2 2}^c\\
\hline
C^c_{1 1} & C_{1 2}^c & D_{1 1}^c & D_{1 2}^c\\
C^c_{2 1} & C_{2 2}^c & D_{2 1}^c & D_{2 2}^c
\end{array}\right],
\]
where 
\begin{align}
A_{1 1}^c &= A_{1} - B_1 R_{21} D_2 C_1, \quad &A_{1 2}^c &=- B_1 R_{21} C_2  \label{eq:partition}\\
A_{2 1}^c &=B_2 R_{12} C_1, \quad  &A_{2 2}^c &= A_{2}- B_2 R_{12} D_1 C_2 \nonumber
\end{align}
and  $R_{12} = (I + D_1 D_2)^{-1}$, $R_{21} = (I + D_2 D_1)^{-1}$ and the rest of the matrices are computed accordingly. For the sake of simplicity we assume that this realisation is minimal.

Passivity and small gain arguments can both be used to determine if the closed loop system is stable but we will focus on the small gain condition. Passivity results in this direction will be addressed in future work, similar ideas were pursued in~\cite{sturk2011structured,AndTSP11}.

It is straightforward to verify that stability of the system with inputs $u_1$, $u_2$ and outputs $y_1$, $y_2$ depends on stability of the transfer function 
$L = (I - G_2 G_1)^{-1}$. 
\begin{prop}[Small Gain Theorem]\label{thm:sg}
Suppose $\cB$ is a Banach-algebra and $Q\in \cB$. If $\|Q\|<1$, then $(I-Q)^{-1}$ exists and 
\begin{equation*}
(I-Q)^{-1}=\sum_{k=0}^\infty Q^k.
\end{equation*} 
\end{prop}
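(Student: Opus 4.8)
The plan is to recognise this as the classical Neumann series argument and to construct the inverse directly as the limit of the partial sums $S_N = \sum_{k=0}^N Q^k$. The two ingredients I would rely on are the submultiplicativity of the Banach-algebra norm, $\|AB\|\le\|A\|\,\|B\|$, and the completeness of $\cB$ as a normed space.

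First I would establish the bound $\|Q^k\|\le\|Q\|^k$ by a trivial induction on $k$ using submultiplicativity. Since $\|Q\|<1$ by hypothesis, the scalar geometric series $\sum_{k=0}^\infty\|Q\|^k$ converges to $1/(1-\|Q\|)$, so by comparison $\sum_{k=0}^\infty\|Q^k\|$ converges as well. This shows the series $\sum_{k=0}^\infty Q^k$ is absolutely convergent, and because $\cB$ is complete, absolute convergence forces convergence of the partial sums to some limit $S\in\cB$.

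It then remains to check that $S$ is a genuine two-sided inverse of $I-Q$. Here I would use the telescoping identity $(I-Q)S_N = S_N(I-Q) = I - Q^{N+1}$, which follows by expanding the products and cancelling. Since $\|Q^{N+1}\|\le\|Q\|^{N+1}\to 0$, the term $Q^{N+1}$ vanishes in norm as $N\to\infty$. Passing to the limit --- legitimate because multiplication by a fixed element is norm-continuous, again a consequence of submultiplicativity --- yields $(I-Q)S = S(I-Q) = I$. Hence $(I-Q)^{-1}$ exists and equals $S=\sum_{k=0}^\infty Q^k$, as claimed.

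The step I expect to be the crux is not any single computation but the appeal to completeness: the whole argument hinges on the fact that an absolutely convergent series in $\cB$ actually converges, which is precisely where the Banach (rather than merely normed-algebra) hypothesis is used. Everything else amounts to bookkeeping with the submultiplicative inequality and the telescoping cancellation.
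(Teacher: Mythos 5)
Your proof is correct and is the standard Neumann-series argument; the paper itself states this proposition as a known classical fact and gives no proof, so there is nothing in the paper to diverge from. Your identification of completeness (absolute convergence implying convergence of the partial sums) as the essential use of the Banach hypothesis, together with submultiplicativity for the bound $\|Q^k\|\le\|Q\|^k$ and the telescoping identity $(I-Q)S_N = I - Q^{N+1}$, is exactly the right skeleton and contains no gaps.
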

Applying Proposition~\ref{thm:sg} we can verify that if $Q:=\|G_2 G_1\|_\Hinf \le \|G_2\|_\Hinf \|G_1\|_\Hinf < 1$ then the function $L$ and hence the closed loop are stable  (cf.~\cite{KhalilNonlinearControl}). We can apply the small gain condition to the closed transfer function, which would result in a condition on $\alpha$-diagonal stability of the matrix $A^c$. However, given only a partitioning $\alpha = \{k_1, k_2\}$ and a realisation of the closed loop transfer function $G^c$, these conditions again will be hard to verify. We can apply a small gain theorem in another way, namely apply it to the matrix $A^c$ directly. In this case, we do not need to know the realisation of transfer functions $G_1$, and $G_2$, all we need to know is the matrix $A^c$ and the partitioning $\alpha$. The conditions on $\alpha$-diagonal stability of $A^c$ are established in the following proposition.

\begin{prop}\label{prop:small-gain} Let $A^c$ be $\alpha$ partitioned with $\alpha = \{k_1, k_2\}$ 
\begin{gather*}
A^c = \begin{pmatrix}
A_{1 1}^c & A_{1 2}^c \\
A_{2 1}^c & A_{2 2}^c
\end{pmatrix}.
\end{gather*}
Let $K_1(s) =  -(s I - A_{1 1}^c)^{-1} A_{1 2}^c$, $K_2(s) = (s I - A_{2 2}^c)^{-1} A_{2 1}^c$ with Hurwitz $A_{1 1}^c$, $A_{2 2}^c$. If there exists a $\gamma>0$ such that 
$\|K_1 \|_\Hinf < 1/\gamma$ and $\|K_2 \|_\Hinf < \gamma$, then the matrix $A^c$ is $\alpha$-diagonally stable.  
\end{prop}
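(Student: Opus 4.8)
The plan is to build a block-diagonal Lyapunov certificate directly, and the first observation that makes this convenient is a change of variable. Finding a block-diagonal $X = \diag{X_1, X_2} \succ 0$ with $A^c X + X (A^c)^T \prec 0$ is equivalent to finding a block-diagonal $P = \diag{P_1, P_2} \succ 0$ with $(A^c)^T P + P A^c \prec 0$: a congruence by $X^{-1}$ turns the former inequality into the latter with $P = X^{-1}$, and inversion preserves the $\alpha$-diagonal structure. I would therefore search directly for such a $P$.

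Next I would invoke the (strict) bounded real lemma (cf.~\cite{ZDG}) on the two subsystems. Realising $K_1$ through the state-space data $(A_{11}^c, -A_{12}^c, I, 0)$ and $K_2$ through $(A_{22}^c, A_{21}^c, I, 0)$, and using that $A_{11}^c$, $A_{22}^c$ are Hurwitz, the gain bounds $\|K_1\|_\Hinf < 1/\gamma$ and $\|K_2\|_\Hinf < \gamma$ produce positive definite $P_1$, $P_2$ solving the strict Riccati inequalities
\begin{gather*}
(A_{11}^c)^T P_1 + P_1 A_{11}^c + \gamma^2 P_1 A_{12}^c (A_{12}^c)^T P_1 + I \prec 0, \\
(A_{22}^c)^T P_2 + P_2 A_{22}^c + \tfrac{1}{\gamma^2} P_2 A_{21}^c (A_{21}^c)^T P_2 + I \prec 0.
\end{gather*}
The quadratic terms record the two coupling gains, while the terms $+I$ (coming from $C^T C = I$) supply a strictly negative margin that I will spend on the cross-coupling.

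The crucial step is to glue these two certificates together with the correct relative weight. I would set $P = \diag{P_1, \gamma^{-2} P_2}$ and expand the quadratic form $z^T\big((A^c)^T P + P A^c\big)z$ at $z = (z_1, z_2)$. The diagonal contributions are controlled by the two Riccati inequalities, and the cross terms $2 z_1^T P_1 A_{12}^c z_2$ and $2\gamma^{-2} z_1^T (A_{21}^c)^T P_2 z_2$ are each bounded by Young's inequality, with the weight chosen so that the square in $(A_{12}^c)^T P_1 z_1$ (respectively $(A_{21}^c)^T P_2 z_2$) is cancelled exactly by the matching Riccati quadratic term. With the scaling $\gamma^{-2}$ the leftover $\|z_1\|^2$ and $\|z_2\|^2$ coefficients also cancel, so the whole form is strictly negative for $z \ne 0$. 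Hence $(A^c)^T P + P A^c \prec 0$, and $X = P^{-1} = \diag{P_1^{-1}, \gamma^2 P_2^{-1}}$ is the sought $\alpha$-diagonal solution, establishing $\alpha$-diagonal stability.

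The main obstacle, and the only genuinely delicate point, is the choice of the inter-block scaling. The value $\gamma^{-2}$ is the unique one that simultaneously zeroes the residual $\|z_1\|^2$ and $\|z_2\|^2$ coefficients after completing the squares, and it is exactly here that the two-sided gain hypothesis (one bound $<1/\gamma$, the other $<\gamma$) is used; a one-sided small-gain estimate would not close the argument. Everything else, namely the bounded real lemma, the preservation of block structure under inversion, and the strictness furnished by the $+I$ terms, is routine.
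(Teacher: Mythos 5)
Your argument is correct and follows essentially the same route as the paper: both apply the bounded real lemma to the two subsystems $K_1$ and $K_2$, extract the corresponding Riccati certificates, and glue them together with the inter-block weight $\gamma^{-2}$. The only differences are cosmetic --- you work with $P = X^{-1}$ and strict Riccati inequalities closed by Young's inequality, whereas the paper works with $X$ itself, solves Riccati equations (perturbing $\gamma$ to $\gamma-\varepsilon$ to obtain strictness) and assembles the block inequality via a Schur complement.
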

\begin{proof} 
We need to show that there exists an $\alpha$-diagonal Lyapunov function for the system $\dot{x}^c=A^c x^c$. For the sake of clarity we drop the superscript $c$ from $A_{i j}^c$ and simply write $A_{i j}$. The inequality $\|K_1 \|_\Hinf < 1/\gamma$ and the Bounded Real Lemma imply that  $X_1 \succ 0$ solves the  Riccati equation
\begin{gather}\label{ineq:brlg1}
0  = A_{1 1} X_1 +  X_1 A_{1 1}^T + \gamma^2 X_1  X_1 + A_{1 2} A_{1 2}^T  = \\
\notag A_{1 1} X_1 +  X_1 A_{1 1}^T + \begin{pmatrix}
  X_1  & A_{1 2}
 \end{pmatrix} \begin{pmatrix}
 \gamma^2 I_{k_1} & 0 \\ 0 & I_{k_2} 
 \end{pmatrix}\begin{pmatrix}
   X_1  \\ A_{1 2}^T 
 \end{pmatrix}, 
\end{gather}
which has always has a solution since $(I, A_{1 1})$ is a controllable pair (cf.~\cite{ZDG}), since the control matrix is equal to $I$ and we can control every state independently.

Again, due to the Bounded Real Lemma the inequality $\|K_2 \|_\Hinf < \mu$ is equivalent to 
\begin{gather}\label{eq:br2}
 A_{2 2} Y_2  + Y_2 A_{2 2}^T +   \mu^{-2} Y_2   Y_2 + A_{2 1} A_{2 1}^T = 0,
\end{gather}
where  $Y_2\succ 0$ since $(I, A_{2 2})$ is a controllable pair (cf.~\cite{ZDG}). Let $\mu = \gamma - \varepsilon$ for some $\varepsilon>0$ such that $\mu > \|K_2\|_\Hinf$, which implies that  $\mu^{-2}  Y_2   Y_2 \succ \gamma^{-2} Y_2 Y_2$ and consequently:
\begin{gather*}
 A_{2 2} Y_2  + Y_2 A_{2 2}^T +  Y_2   \gamma^{-2}  Y_2 + A_{2 1} A_{2 1}^T \prec 0
\end{gather*}

By multiplying the equation by $\gamma^{-2}$ setting $X_2 = Y_2 \gamma^{-2}$
 \begin{multline}
 \label{ineq:brlg2}
 A_{2 2} X_2 + X_2 A_{2 2}^T +   X_2  X_2 \\
+ \gamma^{-2} A_{2 1} A_{2 1}^T \prec 0 \Leftrightarrow\begin{pmatrix}
 \gamma^2 I_{k_1} & 0 \\ 0 & I_{k_2} 
 \end{pmatrix}  \\
 +\begin{pmatrix}
   A_{2 1}^T  \\ X_2
 \end{pmatrix} (A_{2 2}^T X_2 +  X_2 A_{2 2})^{-1} \begin{pmatrix}
  A_{2 1}  & X_2
 \end{pmatrix} \succ 0
\end{multline}

Combining the inequalities \eqref{ineq:brlg1} and~\eqref{ineq:brlg2} yields 
\begin{multline}\label{ineq:blkdiag}
0 \succ A_{1 1} X_1 +  X_1 A_{1 1}^T - \begin{pmatrix}
  X_1  & A_{1 2}
 \end{pmatrix} \begin{pmatrix}
  A_{2 1}^T   \\ X_2
 \end{pmatrix}\\ 
 \cdot ( A_{2 2} X_2 +  X_2 A_{2 2}^T)^{-1}
  \begin{pmatrix}
   A_{2 1} & X_2
 \end{pmatrix} \begin{pmatrix}
   X_1  \\ A_{1 2}
 \end{pmatrix}    \\
=A_{1 1} X_1 +  X_1 A_{1 1}^T - (X_1 A_{2 1}^T + A_{1 2} X_2 ) 
   \\ \cdot(X_2 A_{2 2} + A_{2 2}^T X_2)^{-1} (A_{2 1} X_1  + X_2 A_{1 2}^T ).
\end{multline}
Applying the Schur complement properties to \eqref{ineq:blkdiag} yields
\[
\begin{pmatrix}
A_{1 1} X_1  + X_1 A_{1 1}^T  & A_{1 2} X_2 + X_1 A_{2 1}^T \\
A_{2 1} X_1  + X_2 A_{1 2}^T  & A_{2 2} X_2 + X_2 A_{2 2}^T
  \end{pmatrix} \prec 0,
\]
thus the blocks on the diagonal are negative definite which completes the proof.
\end{proof}

Our proof is constructive, \emph{and} shows how to build an $\alpha$-diagonal Lyapunov function by solving two Riccati equations~\eqref{ineq:brlg1} and~\eqref{eq:br2} instead of solving an LMI. Next we link a simplified version of these conditions with $\alpha$-partitioned and $\mHp$ matrices. 

\subsection{Conditions for $\alpha$-Diagonal Stability via $\mHp$ Matrices}
The authors in~\cite{feingold1962block} showed that $A$ is Hurwitz if it is an $\alpha$-partitioned matrix such that $\cM^\alpha(A) \in \cDDp$, and the matrices $A_{i i}$ are Hurwitz and Metzler for all $i$. In particular, this result shows that stability of $A$ is implied by stability of all the blocks $A_{i i}$. We provide a generalisation of this result.

\begin{lem}\label{prop:alpha-stability}
Let $A$ be $\alpha$-partitioned matrix and $\cM^\alpha(A)$ be an $\mHp$ matrix. Let also $A_{i i}$ be Hurwitz matrices,  and the Hamiltonian matrices 
\begin{equation}
H_i = \begin{pmatrix}
A_{i i} & \gamma_{i}^{-2} I \\
-I & - A_{i i}^T
\end{pmatrix} \label{hinf-cond}
\end{equation}
have no purely imaginary eigenvalues with $\gamma_i = \|A_{i i}^{-1}\|_2 + \varepsilon$ for all $\varepsilon >0$. Then $A$ is a Hurwitz matrix.
\end{lem}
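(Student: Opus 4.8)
The plan is to rule out eigenvalues of $A$ in the closed right half-plane by combining a diagonally scaled version of the block Gershgorin theorem (Proposition~\ref{prop:block-gershgorin}) with a resolvent bound on the diagonal blocks supplied by the Hamiltonian hypothesis.

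First I would extract a block-diagonal scaling from the comparison matrix. Since $\cM^\alpha(A)$ is a matrix with positive diagonal entries $\underline\sigma(A_{ii})$ and nonpositive off-diagonal entries $-\|A_{ij}\|_2$, it coincides with its own scalar comparison matrix, so the $\mHp$ hypothesis says precisely that $\cM^\alpha(A)$ is a (nonsingular) M-matrix. Applying point 1) of Theorem~\ref{prop:h-mat-lyap} to $-\cM^\alpha(A)$ then yields a positive vector $v=(v_1,\dots,v_n)^T$ with $\cM^\alpha(A)v \gg 0$, i.e. $\underline\sigma(A_{ii})\, v_i > \sum_{j\neq i}\|A_{ij}\|_2\, v_j$ for every $i$. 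I would use $v$ to form the block-diagonal similarity $D=\diag{v_1 I,\dots,v_n I}$ and pass to $\tilde A=D^{-1}AD$, which has the same spectrum as $A$, the same diagonal blocks $A_{ii}$, and off-diagonal blocks of norm $(v_j/v_i)\|A_{ij}\|_2$.

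Second, I would convert the Hamiltonian condition into a resolvent estimate via the Bounded Real Lemma. With input and output matrices equal to $I$, the absence of imaginary-axis eigenvalues of $H_i$ together with $A_{ii}$ Hurwitz is equivalent to $\|(sI-A_{ii})^{-1}\|_\Hinf < \gamma_i = \|A_{ii}^{-1}\|_2 + \varepsilon$; letting $\varepsilon\to 0$ gives $\|(sI-A_{ii})^{-1}\|_\Hinf \le \|A_{ii}^{-1}\|_2$, and since the value at $s=0$ is already $\|A_{ii}^{-1}\|_2$ the peak is attained at the origin. Because $A_{ii}$ is Hurwitz, the $\Hinf$ norm equals the supremum of $\|(sI-A_{ii})^{-1}\|_2$ over the closed right half-plane, so for any $\lambda$ with $\Re(\lambda)\ge 0$ we obtain $\|(\lambda I - A_{ii})^{-1}\|_2 \le \|A_{ii}^{-1}\|_2$, equivalently $\|(\lambda I - A_{ii})^{-1}\|_2^{-1} \ge \underline\sigma(A_{ii})$.

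Finally I would close the argument by contradiction. Suppose $\lambda$ is an eigenvalue of $A$ with $\Re(\lambda)\ge 0$. Applying Proposition~\ref{prop:block-gershgorin} to $\tilde A$ produces an index $i$ with $\|(\lambda I - A_{ii})^{-1}\|_2^{-1} \le \sum_{j\neq i}(v_j/v_i)\|A_{ij}\|_2$, and the resolvent estimate then forces $\underline\sigma(A_{ii})\, v_i \le \sum_{j\neq i}\|A_{ij}\|_2\, v_j$, contradicting $\cM^\alpha(A)v \gg 0$. Hence every eigenvalue of $A$ has negative real part, so $A$ is Hurwitz. I expect the main obstacle to be the second step: pinning down that the Hamiltonian hypothesis, posed for every $\varepsilon>0$ with the specific choice $\gamma_i = \|A_{ii}^{-1}\|_2 + \varepsilon$, is exactly the assertion that the resolvent peak lies at $s=0$, so that the right-half-plane bound $\|(\lambda I - A_{ii})^{-1}\|_2 \le \|A_{ii}^{-1}\|_2$ is valid. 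A secondary subtlety is the possibly singular M-matrix case, where the strict scaling $\cM^\alpha(A)v \gg 0$ should be recovered by a perturbation argument before invoking Theorem~\ref{prop:h-mat-lyap}.
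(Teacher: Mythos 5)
Your proposal is correct and follows essentially the same route as the paper's proof: a block-diagonal similarity scaling extracted from the $\mHp$ property of $\cM^\alpha(A)$, the block Gershgorin theorem (Proposition~\ref{prop:block-gershgorin}) applied to the scaled matrix, and the Hamiltonian hypothesis converted via the Bounded Real Lemma into the resolvent bound $\|(\lambda I - A_{ii})^{-1}\|_2 \le \|A_{ii}^{-1}\|_2$ on the closed right half-plane, yielding the same contradiction. The two subtleties you flag (that the peak of the resolvent norm sits at $s=0$, and the possibly singular comparison matrix) are real and are in fact glossed over in the paper's own argument, so your treatment is, if anything, slightly more careful.
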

\begin{proof}
We prove the result by contradiction. Let $A$ have eigenvalues with a positive real part. Since $\cM^\alpha(A)$ is an $\mHp$ matrix, there exists positive scalars $d_i$ such that for every $i$
\begin{gather} \label{cond:h-mat}
\|A_{i i}^{-1}\|_2^{-1} >\sum\limits_{i\ne j}  \|A_{i j}\|_2 \frac{d_j}{d_i}.
\end{gather}

The matrix $A$ is unstable if and only if $D^{-1} A D$ is unstable with $D = \diag{d_1 I_{k_1}, \dots , d_n I_{k_n}}$. Let $\lambda$ be the eigenvalue of $D^{-1} A D$ with a positive real part. By Proposition~\ref{prop:block-gershgorin} 
there exists an index $i$ such that 
\begin{gather} \label{block-gershgorin}
\|(\lambda I - A_{i i})^{-1}\|_2^{-1} \le \sum\limits_{i\ne j} \left\|A_{i j}\frac{d_j}{d_i}\right\|_2 = \sum\limits_{i\ne j} \|A_{i j}\|_2 \frac{d_j}{d_i}.
\end{gather}

Now since the Hamiltonian matrix $H_i$ has no purely imaginary eigenvalues for all $\varepsilon > 0$ and $A_{i i}$ is Hurwitz, this implies that $\|(sI - A_{ i i})^{-1}\|_\Hinf =\|A_{ i i}^{-1}\|_2$. Therefore the maximum 
of $\|(z I - A_{ i i})^{-1}\|_2$ over $z$ with $\Re(z)\ge 0$ is equal to $\|A_{ i i}^{-1}\|_2$, and $\|(\lambda I - A_{ i i})^{-1}\|_2 \le \|A_{ i i}^{-1}\|_2$. 
Hence due to~\eqref{cond:h-mat}
\begin{gather*}
\|(\lambda I - A_{i i})^{-1}\|_2^{-1}\ge \|A_{i i}^{-1}\|_2^{-1} >\sum\limits_{i\ne j}  \|A_{i j}\|_2 \frac{d_j}{d_i}.
\end{gather*}
We arrive at the contradiction with~\eqref{block-gershgorin}, which completes the proof.
 \end{proof}

Lemma \ref{prop:alpha-stability} allows us to determine stability of $A$ by verifying stability of the blocks $A_{ii}$ subject to the condition~\eqref{hinf-cond} and $\cM^\alpha(A)$ being an $\mHp$ matrix. This, however, does not directly imply that there exists an $\alpha$-diagonal Lyapunov function.  In what follows, we only present the result for $\alpha =  \{k_1, k_2\}$ partitioning.

\begin{thm} Let $A$ be $\alpha$ partitioned with $\alpha = \{k_1, k_2\}$, then under the premise of Lemma~\ref{prop:alpha-stability} the matrix $A$ is $\alpha$-diagonally stable. 
\end{thm}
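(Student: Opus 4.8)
The plan is to reduce the theorem to the small-gain test of Proposition~\ref{prop:small-gain}. Specialised to the two-block partition $\alpha = \{k_1,k_2\}$, the premise of Lemma~\ref{prop:alpha-stability} supplies exactly the data that Proposition~\ref{prop:small-gain} asks for. Writing $K_1(s) = -(sI - A_{11})^{-1}A_{12}$ and $K_2(s) = (sI - A_{22})^{-1}A_{21}$ as in that proposition (the diagonal blocks $A_{11}, A_{22}$ are Hurwitz by hypothesis), the goal is to exhibit a scalar $\gamma>0$ with $\|K_1\|_\Hinf < 1/\gamma$ and $\|K_2\|_\Hinf < \gamma$; $\alpha$-diagonal stability of $A$ then follows directly from Proposition~\ref{prop:small-gain}.

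First I would extract two facts from the hypotheses. From the $\mHp$ property of $\cM^\alpha(A)$, the scaled diagonal dominance~\eqref{cond:h-mat} produces positive scalars $d_1, d_2$ with $\|A_{11}^{-1}\|_2^{-1} > \|A_{12}\|_2\, d_2/d_1$ and $\|A_{22}^{-1}\|_2^{-1} > \|A_{21}\|_2\, d_1/d_2$; multiplying these two inequalities cancels the $d_i$ and leaves the key product bound
\[
\|A_{12}\|_2\,\|A_{21}\|_2 < \|A_{11}^{-1}\|_2^{-1}\,\|A_{22}^{-1}\|_2^{-1}.
\]
Second, from the Hamiltonian condition~\eqref{hinf-cond} I would invoke the identity already established inside the proof of Lemma~\ref{prop:alpha-stability}, namely $\|(sI - A_{ii})^{-1}\|_\Hinf = \|A_{ii}^{-1}\|_2$ for $i=1,2$: the Hamiltonian having no imaginary-axis eigenvalues for every $\gamma_i = \|A_{ii}^{-1}\|_2 + \varepsilon$ forces $\|(sI - A_{ii})^{-1}\|_\Hinf \le \|A_{ii}^{-1}\|_2$, while evaluating at $s=0$ gives the reverse inequality.

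With these in hand, submultiplicativity of the induced norm along the imaginary axis gives $\|K_1\|_\Hinf \le \|(sI - A_{11})^{-1}\|_\Hinf\,\|A_{12}\|_2 = \|A_{11}^{-1}\|_2\,\|A_{12}\|_2$ and, symmetrically, $\|K_2\|_\Hinf \le \|A_{22}^{-1}\|_2\,\|A_{21}\|_2$. The product bound above is precisely the assertion that $\|A_{22}^{-1}\|_2\|A_{21}\|_2 < \bigl(\|A_{11}^{-1}\|_2\|A_{12}\|_2\bigr)^{-1}$, so the open interval
\[
\Bigl( \|A_{22}^{-1}\|_2\,\|A_{21}\|_2,\; \bigl(\|A_{11}^{-1}\|_2\,\|A_{12}\|_2\bigr)^{-1} \Bigr)
\]
is nonempty (reading the right endpoint as $+\infty$ when $A_{12}=0$, and the left as $0$ when $A_{21}=0$). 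Choosing any $\gamma$ in this interval makes both small-gain inequalities hold simultaneously, and Proposition~\ref{prop:small-gain} then delivers an $\alpha$-diagonal Lyapunov function, completing the proof.

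The norm submultiplicativity and the interval-nonemptiness bookkeeping are routine. The substantive step I would single out is the identity $\|(sI - A_{ii})^{-1}\|_\Hinf = \|A_{ii}^{-1}\|_2$, which is where the Hamiltonian hypothesis~\eqref{hinf-cond} does the real work: without it one only controls the value at $s=0$ and cannot exclude a larger peak elsewhere on the imaginary axis, in which case the product bound coming from the $\mHp$ condition would fail to translate into a feasible choice of $\gamma$.
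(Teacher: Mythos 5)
Your proposal is correct and follows essentially the same route as the paper: reduce to Proposition~\ref{prop:small-gain} by bounding $\|K_i\|_\Hinf$ via $\|(sI-A_{ii})^{-1}\|_\Hinf = \|A_{ii}^{-1}\|_2$ (from the Hamiltonian hypothesis) and the off-diagonal block norms, then use the scaled diagonal dominance from the $\mHp$ condition to produce a feasible $\gamma$. The only cosmetic difference is that the paper takes $\gamma$ directly as the ratio $d_2/d_1$ of the scaling factors, whereas you multiply the two dominance inequalities and select $\gamma$ from the resulting nonempty interval -- the same argument in slightly different packaging.
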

\begin{proof} 
The proof is using the small gain argument for the systems $G_1(s) =  (s I - A_{1 1})^{-1} A_{1 2}$, $G_2(s) = (s I - A_{2 2})^{-1} A_{2 1}$. 
We have that $\|G_1\|_\Hinf \|G_2\|_\Hinf \le \Delta$ where
\begin{gather*}
 \Delta := \|A_{2 1}\|_2 \|(sI - A_{1 1})^{-1}\|_\Hinf \|A_{1 2}\|_2 \|(sI - A_{2 2})^{-1}\|_\Hinf .
\end{gather*}
Under the premise of Lemma~\ref{prop:alpha-stability} we have that $\gamma \|A_{1 2}\|_2 < \| A_{1 1}^{-1}\|_2^{-1}$ and  $\gamma^{-1}\|A_{2 1}\|_2 < \| A_{2 2}^{-1}\|_2^{-1}$. Hence 
$\|G_1 \|_\Hinf < \gamma^{-1}$, while $\|G_2 \|_\Hinf < \gamma$.  Proposition~\ref{prop:small-gain} proves the claim.
\end{proof}

Note that if $A$ is such that $\cM^\alpha(A)$, $\cM^\alpha(A^T)\in \cDDp$, it is not generally true that $\cM^\alpha(A+A^T)\in\cDDp$. This property holds for $\alpha =\{1,\dots, 1\}$ and was used in the proof of Theorem~\ref{prop:h-mat-lyap}. Hence the absence of this property for a general $\alpha$ is the major obstacle for extending Theorem~\ref{prop:h-mat-lyap} to the $\alpha$-diagonal case.
\section{Numerical Example}\label{sec:example}
Consider the one-dimensional heat equation in the form 
\begin{align*}
&\frac{\partial T(t,x)}{\partial t} = \alpha \frac{\partial^2 T(t,x)}{ \partial x^2} + u(x,t) && x\in(0,1), t >0 \\
&T(0,t)  = T(1,t) = 0, 																		     && t\ge 0 \\
&T(x, 0) = 0                                                                                  && x\in[0,1]
\end{align*}
with $\alpha = -0.01$, where $T(t,x)$ denotes the temperature at time $t$ at $x$. Assume, we want to heat (i.e. apply an input) at a point of the rod located at $1/3$ of its length across, and observe the temperature at a point on the rod located at $2/3$ of its length. Then as in~\cite{MRBench}, we can obtain the following spatially discretised model: 
\begin{align*}
\dot{X}(t) &= AX(t) + Bu(t),\quad X(0)=0,\\
Y(t) &= CX(t), 
\end{align*}
where $X(t)\in \R^n$ is the temperature at time $t$ at each of the $n$ spatial discretisation points, and   
\begin{gather}
A = \alpha (n+1)^2
\begin{pmatrix}
2  & -1    &        &        &    \\
-1 &  2    & -1     &        &    \\
   &\ddots & \ddots & \ddots &    \\
   &       & -1     & 2      & -1 \\
   &       &        & -1     &  2     
\end{pmatrix}
\in \R^{n\times n}, 
\end{gather}
The matrices $B \in \R^{n\times 1}$, $C\in \R^{1\times n}$ are equal to zero except for the entries $\lceil n/3 \rceil $ and $\lceil 2 n/3 \rceil$, respectively, which are equal to one.

\begin{table}
\caption{Time to compute the generalised controllability Grammian} \label{tab:time-res}
\centering
\begin{tabular}{c c c c c}
Size of the system        &   $50$  & $100$  & $150$  & $200$ \\
\hline
\hline
SDP solution              &  $0.94$ & $22.7$ &$310.7$ & NA  \\
SOCP relaxation &  $0.74$ & $4.11$ & $11.9$ & $31.2$ \\
LP relaxation  &  $0.01$ & $0.02$ & $0.05$ & $0.10$ \\ 
LP relaxation w scaling &  $0.01$ & $0.03$ & $0.05$ & $0.10$ \\
\hline
\end{tabular}
\end{table}

Our goal is to compute the diagonal controllability  Gramians $P$ for various $n$ with a minimal trace, which we will do in the dual form:
\begin{align*}
\max\limits_Y~~ & \tr(B B^T Y), \\
\textrm{s.t.~~} & \diag{Y A + A^T Y + I} = 0 \\
                & Y \prec 0.
\end{align*}

In the dual form, we have an LP relaxation where $-Y$ belongs to the dual to the cone of symmetric $\cDDp$ matrices, and an SOCP relaxation, $-Y$ belongs to the dual to the cone of symmetric $\mHp$ matrices. We solve only the dual SDP formulation and the corresponding relaxation.  Due to the structure of the system, the trace of its Gramians does not change much with dimensions and we always get the optimal values in the range between $6.5$ to $6.6$ for the SDP programme. Remarkably the results for the SOCP relaxation are only slightly higher, but in the same range of values. This however, is due to structure of the system, where the drift is Metzler and the matrix $B B^T$ has only one non-zero entry on the diagonal. The optimal solutions for the LP relaxation are in the range between $10.7-10.9$, hence there is a drop in quality when using this relaxation. 

In Table~\ref{tab:time-res}, we provide the computational times for various systems sizes $n$. The entry ``NA'' means that the programme terminated due to running out of memory. Note however, that we do not take into account the time for parsing the constraints (that is, we plot only the variable ``solvertime'' in Yalmip~\cite{YALMIP}). Since $A$ is a Metlzer matrix it is straightforward to find a transformation $T$ such that $T A T^{-1}$ becomes a diagonally dominant matrix (see Theorem~\ref{prop:h-mat-lyap}). We have implemented the LP relaxation while transforming the $A$, $B$ matrices with such a transformation $T$. The optimal solutions for the trace vary between $7.1$ and $7.3$, thus drastically improving the quality of the relaxation with a mild loss (if any) in computational time.

\section{Discussion and Conclusion\label{s:con}}
We have provided some sufficient conditions on $A$, which guarantee the existence of feasible points in~\eqref{main-lmi} and interpreted these results as small gain like conditions. Moreover, our sufficient conditions also provide computationally cheap solutions, for example Proposition~\ref{prop:small-gain} replaces an LMI constraint with two Riccati Equation solutions. If we drop the ``$X$ is $\alpha$-diagonal'' constraint and set $Q = A X + X A^T$, then the LMI~\eqref{eq:lyap_lmi} has a solution for any $Q\prec 0$ \emph{if and only if} $\lambda_i(A)+\bar{\lambda}_j(A)\neq 0$. Since $Q$ is arbitrarily negative definite, we can replace the constraint $A X + X A^T\prec 0$ with $-A X - X A^T\in\cDDp$. Thus our solvability LMI becomes a linear program. Finally we showed how our constructive proofs can be used to initiate a recently developed basis pursuit algorithm for solving large scale optimization problems.

\bibliography{bibl_hmatr}
\end{document}